\newcommand {\M}{\mathcal{C}}
\newcommand {\R}{\mathbb{R}}
\theoremstyle{plain}
\newtheorem{thm}{Theorem}[section]
\newtheorem{lem}[thm]{Lemma}
\newtheorem{prop}[thm]{Proposition}
\theoremstyle{definition}
\newtheorem{defn}[thm]{Definition}
\theoremstyle{remark}
\newtheorem{rem}[thm]{Remark}
\begin{document}

\baselineskip=17pt

\title[Plane Self-Similar Solutions] 
{A Note on Self-Similar Solutions of the Curve Shortening Flow}

%
\author{M\'arcio Rostirolla Adames}
\address[M\'arcio Rostirolla Adames]{Departamento Acad\^emico de Matem\'atica\\
        UTFPR - Campus Curitiba\\
        Curitiba - PR, Brasil 80230-901}
\email[M.~Adames]{marcioadames@utfpr.edu.br}

\date{}


\begin{abstract}
This article gives an alternative approach to the self-shrinking and self-expanding solutions of the curve shortening 
flow, which are related to singularity formation of the mean curvature flow. The motivation for the self-similar 
solutions arises from natural area preserving rescaling. Further we describe the self-similar solutions in terms of 
a simple ODE and give an alternative proof that they lie in planes. 
\end{abstract}

   \subjclass{Primary 53C44, 53A04; Secondary 34A05}

   \keywords{Curve Shortening Flow, Abresch \& Langer Curves, Planar Solutions}

\maketitle

\begin{section}{Introduction}
To deform a curve (usually smooth) by the curve shortening flow (CFS) is to let it evolve in the direction of its 
curvature vector, thus generating a family of curves. The problem of understanding the behavior of such family was 
first addressed by Mullins \cite{MR0078836} in 1956 to study ideal grain boundary motion in two dimensions. This 
problem was further studied by Brakke \cite{MR485012} in 1978 (and 1975) in a more general setting (surfaces moving by 
their mean curvature). Renewed interest in the topic came with the works of Gage and Hamilton (e. g. \cite{MR840401}, 
where they show that convex plane curves shrink to a point, becoming more circular as time advances) and Grayson (e. g. 
\cite{MR979601}). Since then the problem has been studied by many, and of particular significance has been the study of 
singularity formation.

The present work does not purport to contain a comprehensive introduction to the CFS because of the great number 
of contributions to the subject (e. g. ``The curve shortening flow'' of Chou and Zhu \cite{MR1888641} contains 113 
items in it's bibliography). As an important result we cite the complete classification of closed plane curves which 
shrink under the CFS by Abresch and Langer \cite{MR845704}, which is related to the singularity formation of the flow 
(\cite{MR1030675} from Huisken for the Mean Curvature Flow, which generalizes the CSF for higher dimension).

This work was somewhat inspired by the recent works of Halldorsson \cite{MR2931330}, which classifies self-similar (in a 
broader context) plane curves of the CSF, and Altschuler, Altschuler, Angenent and Wu \cite{MR3043378}, which 
provides a classification of self-similar solutions (or solitons) of the CSF in $\R^n$. Both works are based 
in ODE techniques. The last of the works mentions the well known fact (see \cite{MR3043378}) that dilating solitons are 
planar and a we prove this fact sections 5 and 6.

Section 2 includes motivation for the CSF and for self-similar solutions, and a description of the 
problem as a PDE. In section 3 polar coordinates and a moving frame based in these coordinates are employed to 
characterize the plane self-shrinking solutions (known as Abresch \& Langer curves) in terms of solutions to a simple 
nonlinear ODE in Theorem \ref{secondary}. In section 4 spherical coordinates and a moving frame based on them are used 
to plot spatial self-shrinkers, and these plots indicate that the self-shrinkers lie in planes; result which is shown 
for curves in $\mathbb{R}^n$ at section 5. Section 6 brings the analogs of sections 4 and 5 for self-expanding curves.
\end{section}

\begin{section}{Basics of the flow}

Given a continuous family of smooth regular curves $\gamma_t:I\to \R^n$, defined on some interval $I$, we can see it 
as a two variable function $\gamma:(a, b) \times I \to \R^n$ defined by $\gamma(t,x)=\gamma_t(x)$.
\begin{defn} A family $\gamma:(a, b) \times I \to \R^n$ of smooth immersions $\gamma_t:I\to \R^n$, evolves by the 
\textit{curve shortening flow} (CSF) if it satisfies
\begin{equation}\label{key}
\left(\frac{\partial \gamma}{\partial t}\right)^\perp = \frac{\partial^2 \gamma}{\partial s^2},
\end{equation}
where $s$ is the arc length parameter (not necessarily the parameter of $I$) of the curve $\gamma_t$.
\end{defn}

This flow has been studied by many and we deduce some fundamental results in this section.

In order to see a motivation for this flow, let us consider some arbitrary family of immersions: Let $\gamma_0: I 
\to \R^n$ be an immersion and $\gamma: (-\epsilon, \epsilon) \times  I \to \R^n$ be a smooth family of immersions 
$\gamma_t:I \to \R^n$ with $\gamma(0,u) = \gamma_0(u)$, for all $u \in I$. Further let us denote $V(u)$ for the 
\textit{infinitesimal generator} of the variation $\gamma_t$
$$
V(u) = \left.\frac{\partial \gamma}{\partial t}(t,u) \right|_{t=0}.
$$

Let us denote $\M$ for the smooth curve $\gamma_0(I)$ and assume, w.l.o.g., that $\gamma_0$ is a 
parametrization of $\M$ by 
arc-length. If $\M$ is closed then the length $\mathcal{L}_t$ of $\gamma_t(I)$ induced by $\gamma_t$ is finite for 
any $t \in (-\epsilon, \epsilon)$ and we can calculate:
\begin{prop}
The first variation of the arc length with respect to $\gamma$ is given by:
$$
\left.\frac{d \mathcal{L}_t}{d t}\right|_{t=0} =-\int_0^L \left<V(s), \frac{\partial^2 
\gamma_0}{\partial s^2}(s) \right> ds.
$$
\end{prop}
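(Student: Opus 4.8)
The plan is to compute the derivative of arc length directly from its integral definition.

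The plan is to work directly from the integral definition of arc length and differentiate under the integral sign. Writing $u$ for a fixed parameter on $I$, and recalling that the arc length of $\gamma_t$ generally depends on $t$, I would express the length as
$$
\mathcal{L}_t = \int_I \left| \frac{\partial \gamma}{\partial u}(t,u) \right| du,
$$
where the integrand is the speed of $\gamma_t$ measured in the fixed parameter $u$. The key conceptual point, and the one requiring care, is to differentiate in the fixed parameter $u$ rather than in the moving arc-length parameter $s$ (which itself depends on $t$), and only pass to arc length at the very end.

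Differentiating under the integral, which is justified by smoothness of $\gamma$ and compactness of $I$ since $\M$ is closed, and using $\frac{\partial}{\partial t}|w| = \langle \partial_t w, w\rangle / |w|$, I obtain
$$
\frac{d \mathcal{L}_t}{dt} = \int_I \frac{1}{\left| \partial_u \gamma \right|} \left\langle \frac{\partial^2 \gamma}{\partial t\, \partial u}, \frac{\partial \gamma}{\partial u} \right\rangle du.
$$
Evaluating at $t=0$, where $\gamma_0$ is parametrized by arc length so that $\left| \partial_u \gamma_0 \right| = 1$ and $u = s$, and swapping the mixed partials via $\partial_t \partial_u = \partial_u \partial_t$ to bring in $V = \partial_t \gamma|_{t=0}$, this becomes
$$
\left. \frac{d \mathcal{L}_t}{dt} \right|_{t=0} = \int_0^L \left\langle \frac{\partial V}{\partial s}, \frac{\partial \gamma_0}{\partial s} \right\rangle ds.
$$

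Finally I would integrate by parts in $s$. Since $\M$ is closed, the parametrization is periodic and the boundary term $\left[ \langle V, \partial_s \gamma_0 \rangle \right]_0^L$ vanishes, leaving precisely
$$
\left. \frac{d \mathcal{L}_t}{dt} \right|_{t=0} = -\int_0^L \left\langle V(s), \frac{\partial^2 \gamma_0}{\partial s^2}(s) \right\rangle ds,
$$
as claimed. The only genuine obstacle here is bookkeeping rather than analysis: one must resist differentiating in $s$ directly, since $s$ is the arc length of the \emph{evolving} curve $\gamma_t$ and hence a $t$-dependent reparametrization. Performing the computation in the fixed parameter $u$ and invoking the arc-length normalization only at $t=0$ sidesteps this pitfall cleanly.
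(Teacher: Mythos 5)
Your proposal is correct and follows essentially the same route as the paper: differentiate the speed under the integral sign in a fixed ($t$-independent) parameter, use the arc-length normalization at $t=0$ after swapping the mixed partials, and integrate by parts with the boundary term vanishing by closedness. Your version is simply a more carefully annotated rendering of the paper's computation, making explicit the fixed-parameter bookkeeping that the paper compresses into the remark that $s$ is the arc-length parameter only at $t=0$.
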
\label{LengthDecrease}
\begin{proof} 
It holds that $\gamma_0(0)=\gamma_0(L)$ and
\begin{align*}
\left.\frac{d \mathcal{L}_t}{d t}\right|_{t=0} =& \int_0^L\left.\frac{d}{dt}|\gamma_t(s)|\right|_{t=0}ds = 
\int_0^L\frac{1}{|\gamma_0(s)|} \left<\left.\frac{\partial^2}{\partial t \partial s}\gamma_t(s)\right|_{t=0}, 
\frac{\partial \gamma_0}{\partial s}(s) \right>ds\\
=& -\int_0^L \left<V(s), \frac{\partial^2 \gamma_0}{\partial s^2}(s) \right> ds,
\end{align*}
where $s$ is only meant to be the arc-length parameter at $t=0$.
\end{proof}

But $\frac{\partial^2 \gamma_0}{\partial s^2}(s) \perp \M$ at $\gamma_0 (s)$. So that only the normal component of 
$V(s)$ changes the length of $\M$. Furthermore the length is not changed if $V(s)$ is tangent to the surface for all 
$s \in I$, which is obvious, since $V(s)$ tangent would mean that the length's variation at $t=0$ is the same 
as the one made by the parameter change
$$
\gamma(t,s) = \gamma_0\left(t\left<V(s), \frac{\partial \gamma_0}{\partial s}(s)\right>+s\right).
$$

The vector field $-\frac{\partial^2 \gamma_0}{\partial s^2}(s)$ can be seen as the \textit{gradient} of the 
length functional. This means that deforming a curve by the curve shortening flow is to decrease the length of the curve 
in the fastest possible direction among all infinitesimal variations with $L^2$-norm equal to 
$$
C:=\left(\int_0^l\left<\frac{\partial^2 \gamma_t}{\partial s^2},\frac{\partial^2 \gamma_t}{\partial s^2}\right> 
d\mu\right)^{1/2}.
$$

To justify the statement about the fastest possible deformation direction, notice that Prop. \ref{LengthDecrease} 
implies that the infinitesimal generators causing the greatest, in absolute value, variation in the 
length are the ones in the direction of $\frac{\partial^2 \gamma_0}{\partial s^2}(s)$, i. e.
$$
V(\gamma_0(s)) = a(s) \frac{\partial^2 \gamma_0}{\partial s^2}(s).
$$
Thus the maxima and minima are critical points of the functional
$$
I(a):=-\int_0^L a(s)\left<\frac{\partial^2 \gamma_0}{\partial s^2}(s), \frac{\partial^2 \gamma_0}{\partial 
s^2}(s) \right> ds
$$
subject to the restriction
$$
H(a) =\int_0^L a^2(s)\left<\frac{\partial^2 \gamma_0}{\partial s^2}(s), \frac{\partial^2 \gamma_0}{\partial s^2}(s) 
\right> ds=C^2.
$$
Therefore, from the Euler-Lagrange equations for this problem (for example Courant and John
\cite{MR1016380}), one gets
$$
-\|\gamma''_0(s)\|^2+2\lambda a(s) \|\gamma''_0(s)\|^2=0,
$$
so that $a(s)$ is constant and then $a(s)=1$ (because of the restriction).

Let us follow considering the variation of the area limited by the curve (for closed plane curves) with respect to an 
arbitrary family of immersions.

In this way the target space is $\R^2$ and one can write
$$
\gamma_t(s)= (\gamma_t^1(s), \gamma_t^2(s)),
$$
so that the tangent vector and a normal versor, respectively, are
$$
\frac{\partial\gamma_t(s)}{\partial s}= \left(\frac{\partial\gamma_t^1}{\partial s}(s), 
\frac{\partial\gamma_t^2}{\partial s}(s)\right), \hspace{8mm} N_t(s)= 
\left(-\frac{\partial\gamma_t^2}{\partial s}(s), \frac{\partial\gamma_t^1}{\partial s}(s)\right),
$$
and the infinitesimal generator
$$
 V_t(s)= \frac{\partial\gamma_t}{\partial t}(s) = (V^1(s), V^2(s)).
$$

Further the area limited by $\gamma_t(s)$ is given, from Green's Theorem, by
$$
\mathcal{A}(\gamma_t(s)) = \frac{1}{2}\int_0^L (-\gamma_t^2(s), \gamma_t^1(s))\cdot \frac{\partial\gamma_t}{\partial 
s}(s) ds
$$

Therefore the first variation of the area can be calculated to be
\begin{align*}
 \frac{d}{dt}\mathcal{A}(\gamma_t) =& \frac{1}{2}\int_0^L \frac{\partial}{\partial t}(-\gamma_t^2(s), 
\gamma_t^1(s))\cdot \frac{\partial\gamma_t}{\partial s}(s) ds + \frac{1}{2}\int_0^L (-\gamma_t^2(s), 
\gamma_t^1(s)\cdot\frac{\partial^2\gamma_t}{\partial t \partial s}(s) ds\\
=& \frac{1}{2}\int_0^L (-V^2(s), V^1(s))\cdot \frac{\partial\gamma_t}{\partial s}(s) ds - \frac{1}{2}\int_0^L 
\frac{\partial}{\partial s}(-\gamma_t^2(s),\gamma_t^1(s))\cdot\frac{\partial\gamma_t}{\partial t}(s) ds\\
=& \int_0^L \left<V_t(s), N_t(s)\right> ds,
\end{align*}
which, of course, is zero if $V_t(s)$ is parallel to the tangent vector of $\gamma_t(s)$ for all $s \in I$. 
Furthermore, among all 
vector fields $V_t$ with given $L^2$ norm, the ones causing greater change in the area are pointing everywhere in the 
normal direction.

With the purpose of deforming the initial curve \textit{and} still preserve area, one could rescale the immersions 
taking another family of immersions:
$$
 \tau_t(s):=c(t)\gamma_t(s).
$$
Then the rate of change of the area enclosed by $\tau_t$ is
$$
 \frac{d}{dt}\mathcal{A}(\tau_t) =\int_0^L c(t)\frac{dc}{dt}(t)\left<\gamma_t(s), N_t(s)\right> + 
c^2(t)\left<V_t(s), N_t(s)\right> ds
$$

Searching for constant enclosed area, it is then natural to look for families of immersions satisfying 
\begin{equation}
\label{varia} V_t(s) = -\frac{c'(t)}{c(t)}\gamma_t^\perp(s).
\end{equation}

Only some very particular variations can have infinitesimal generators satisfying eq. \eqref{varia}. This 
happens because $V_t(s)$ has to point in the direction of $\gamma_t^\perp(s)$ and it is necessary that, fixing some 
$t$, there is a constant $\frac{c'(t)}{c(t)}$ scaling $V_t(s)$ to $-\gamma_t^\perp(s)$ for all $s \in I$.

It is the case that some families of immersions evolving by the curve shortening flow naturally have this 
property. Further such a family of immersions is just a homothety (see below). This means that the shape of the curve 
(which defines $\frac{\partial^2 \gamma}{\partial s^2}$ and $-\gamma^\perp(s))$) is of greater importance than the 
function $\frac{c'(t)}{c(t)}$, which only changes the size (and/or reflects) the curve. As we are interested in the 
shape of the curve, we fix $t \in [0,T)$ and rescale $\gamma_t(s)$ in order to cancel out  the constant
$\frac{c'(t)}{c(t)}$. In the curve shortening flow, $V_t = \frac{\partial^2 \gamma_t}{\partial s^2}$ and eq. 
\eqref{varia} becomes
\begin{equation}
\label{shrinker1} \frac{\partial^2 \gamma_t}{\partial s^2} = -\gamma_t^\perp(s),
\end{equation}
Such immersions are called \textit{shrinking self-similar solutions} (or \textit{self-shrinkers}) of the curve 
shortening flow. More important than this motivation, is the fact that self-shrinkers are related to the singularity 
formation of the flow, which is not in the present work. The interested reader could consult \cite{MR1030675}.

\begin{thm}\label{hint}
If $\M$ is a closed smooth curve in $\R^n$ and $\gamma: I \to \M$ is a parametrization of $\M$ satisfying 
$$
\frac{\partial^2 \gamma}{\partial s^2} = -\gamma^\perp(s),
$$
then the homothety given by the family of immersions
$$
\varphi(t,s):= \sqrt{1-2t} \gamma(s),
$$
with $s$ being the arc-length parameter only at $t=0$, evolves by the curve shortening flow and satisfies 
$\varphi(0,s):= \gamma(s)$.
\end{thm}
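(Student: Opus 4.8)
The plan is to verify directly that $\varphi(t,s) = \sqrt{1-2t}\,\gamma(s)$ satisfies the defining equation \eqref{key} of the curve shortening flow, the only genuine care being that the arc length parameter appearing in \eqref{key} is that of the curve at time $t$, whereas $s$ is arc length only at $t=0$. First I would record the initial condition, which is immediate: writing $c(t) := \sqrt{1-2t}$ we have $c(0)=1$, so $\varphi(0,s) = \gamma(s)$. I would also compute once and for all that $c'(t) = -1/c(t)$.

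Next I would identify the arc length parameter at time $t$. Since $\frac{\partial \varphi}{\partial s} = c(t)\gamma'(s)$ and $|\gamma'(s)| = 1$ (as $s$ is arc length at $t=0$), the speed of $\varphi(t,\cdot)$ is constantly $c(t)$, so the arc length element at time $t$ is $c(t)\,ds$ and differentiation with respect to the time-$t$ arc length $\sigma$ is $\frac{\partial}{\partial \sigma} = \frac{1}{c(t)}\frac{\partial}{\partial s}$. This gives the unit tangent $\frac{\partial \varphi}{\partial \sigma} = \gamma'(s)$ and hence the right-hand side of \eqref{key},
$$
\frac{\partial^2 \varphi}{\partial \sigma^2} = \frac{1}{c(t)}\frac{\partial}{\partial s}\gamma'(s) = \frac{1}{c(t)}\gamma''(s).
$$

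For the left-hand side I would compute $\frac{\partial \varphi}{\partial t} = c'(t)\gamma(s) = -\frac{1}{c(t)}\gamma(s)$ and then take its normal component. Because the homothety by $c(t)$ leaves tangent directions unchanged, the unit tangent to $\varphi(t,\cdot)$ is again $\gamma'(s)$, so the projection $(\,\cdot\,)^\perp$ at time $t$ agrees with the one defining $\gamma^\perp$ in the hypothesis; thus
$$
\left(\frac{\partial \varphi}{\partial t}\right)^\perp = -\frac{1}{c(t)}\gamma^\perp(s).
$$
Finally I would invoke the self-shrinker hypothesis $\gamma''(s) = -\gamma^\perp(s)$ to rewrite this as $\frac{1}{c(t)}\gamma''(s)$, which matches the right-hand side computed above and establishes \eqref{key}.

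The computation itself is elementary; the one place demanding attention — and the step I expect to be the main (if modest) obstacle — is the reparametrization bookkeeping: keeping straight that \eqref{key} is posed in the time-$t$ arc length $\sigma$ while the hypothesis is phrased in the fixed parameter $s$, and verifying that the normal projection in \eqref{key} is taken against the same tangent line that defines $\gamma^\perp$. Once the identity $\frac{\partial}{\partial \sigma} = \frac{1}{c(t)}\frac{\partial}{\partial s}$ and the invariance of tangent directions under homothety are in hand, both sides of \eqref{key} reduce to $\frac{1}{c(t)}\gamma''(s)$.
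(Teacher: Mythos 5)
Your proposal is correct and follows the same route as the paper: a direct verification that both sides of \eqref{key} reduce to $\frac{1}{\sqrt{1-2t}}\gamma''(s)$, using that the time derivative's normal projection gives $-\frac{1}{\sqrt{1-2t}}\gamma^\perp(s)$ while the homothety rescales the arc-length second derivative by the reciprocal factor. In fact your write-up is more careful than the paper's one-line computation, which glosses over precisely the reparametrization bookkeeping (the distinction between $s$ and the time-$t$ arc length $\sigma$) that you make explicit.
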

\begin{proof}
 In fact it is a solution to \eqref{key} because rescaling $\gamma(s)$ has the effect of dividing $\frac{\partial^2 
\gamma}{\partial s^2}$ by the same factor and
 $$
\left(\frac{\partial \varphi(t,s)}{\partial t} \right)^{\perp}= \left(\frac{-1}{\sqrt{1-2t}}\gamma(s) 
\right)^{\perp}=\frac{\partial^2 \gamma}{\partial  s^2}
 $$
\end{proof}

There has been great interest in studying these self-shrinkers (also in other contexts), for they are related to the 
singularities of the curve shortening flow. A classification is due to Abresch \& Langer \cite{MR845704}.

\begin{subsection}{PDE point of view}
 
With equation \eqref{key} we defined a family of immersions evolving by the curve shortening flow, however it is more 
natural to have an initial smooth immersion $\gamma_0: I \to \R^n$ and try to deform it by the 
CSF, i. e. to find a family of immersions $\gamma:[0, T) \times I \to \R^n$ evolving by the CSF with 
$\gamma(0,\cdot)=\gamma_0(\cdot)$. Note that 
finding this family is \textit{not}, in the form described, to solve the initial value problem:
\begin{align*}
 \left(\frac{\partial \gamma}{\partial t}\right)^\perp =& \frac{\partial^2 \gamma}{\partial s^2},\\
 \gamma(0,\cdot) =& \gamma_0(\cdot),
\end{align*}
then there is no guarantee that all the immersions of the family are parametrizations by arc-length (even if the 
initial one is so parametrized) of the curves defined by them. But the acceleration vector above can be written, in an 
arbitrary parametrization $\gamma_t(u)$ as
$$
\frac{\partial^2 \gamma}{\partial s^2} = \frac{\gamma''_t(u)}{\left<\gamma'_t(u), \gamma'_t(u)\right>} - 
\frac{\left<\gamma''_t(u), \gamma'_t(u) \right>}{\left(\left<\gamma'_t(u), \gamma'_t(u)\right>\right)^2}\gamma'_t(u).
$$

This would then lead us to the initial value problem:
\begin{equation}
\label{original}\left\{
\begin{array}{l}
 \left(\frac{\partial \gamma}{\partial t}\right)^\perp = \frac{\gamma''_t(u)}{\left<\gamma'_t(u), 
\gamma'_t(u)\right>} - \frac{\left<\gamma''_t(u), \gamma'_t(u) \right>}{\left(\left<\gamma'_t(u), 
\gamma'_t(u)\right>\right)^2}\gamma'_t(u),\\
\gamma(0,\cdot) = \gamma_0(\cdot).
\end{array}
\right.
\end{equation}
 
In order to better understand this formulation of the problem, let us consider the related problem:
\begin{equation}\label{var1}
\left\{
\begin{array}{l}
 \frac{\partial \gamma}{\partial t} = \frac{\gamma''_t(u)}{\left<\gamma'_t(u), 
\gamma'_t(u)\right>} - \frac{\left<\gamma''_t(u), \gamma'_t(u) \right>}{\left(\left<\gamma'_t(u), 
\gamma'_t(u)\right>\right)^2}\gamma'_t(u),\\
\gamma(0,\cdot) = \gamma_0(\cdot).
\end{array}
\right.
\end{equation}

Note that any solution of \eqref{var1} is also a solution to \eqref{original} because the vector on the right side is 
already perpendicular to the curve defined by $\gamma_t$. On the other hand a solution $\gamma(t,u)$ to 
\eqref{original} 
induces a solution $\beta(t,u)$ of \eqref{var1} in the following way: Let
$$
\psi(t,u):=u-\int_0^t\frac{\left<\frac{\partial \gamma}{\partial t}, 
\frac{\partial \gamma}{\partial u}\right>}{\left< \frac{\partial \gamma}{\partial u},\frac{\partial \gamma}{\partial 
u}\right>}dt.
$$
Thus $\psi(0,u)=u$ and $\beta(t,u):=\gamma (t,\psi (t,u))$ is a solution to \eqref{var1} because of the coordinate 
independence of $\frac{\partial^2 \gamma}{\partial s^2}$.

This means that solutions to eq. \eqref{var1} are in fact solutions to eq. \eqref{original}  parametrized in a 
particular fashion. As we are interested into the geometric properties, the parametrization of the curve must play no 
role, thus the curve shortening flow is stated as in eq. \eqref{original}. Nevertheless the problem stated in eq. 
\eqref{var1} is an initial value problem more suitable to be studied with help of PDE theory.

\begin{rem}
One might be missing further boundary conditions in eqs. \eqref{var1} or \eqref{original}, specifically conditions at 
the boundary of $I$. As we are interested in the curve as geometric object, it is meant to be extended as long as 
possible so that often $I = \R$. This has the side effect of entire sections of $\M_t:=\gamma_t(I)$ being run over and 
over by $\gamma_t$ if the curve is closed. This could be mended by taking $I=[a,b]$ and further requiring
$$
\gamma(a)=\gamma(b), \hspace{3mm} \gamma'(a)=\gamma'(b), \hspace{3mm} \gamma''(a)=\gamma''(b), \,\,\, \ldots
$$
\end{rem}

In the literature it is often considered a fixed curve $\M$ and family of immersions $\gamma: \M \to \R^n$, so that eq. 
\eqref{var1} is only a local form for the initial value problem. For further reading about existence an uniqueness of 
solutions to the curve shortening flow we indicate Gage and Hamilton's paper \cite{MR840401}, Grayson's article 
\cite{MR979601} or Chou and Zhu's book \cite{MR1888641}. For solutions of such systems in a more general setting 
could be consulted \cite{SMK11}, \cite{MR2815949} or \cite{MR2274812}, among others.

\end{subsection}

\end{section}

\begin{section}{Plane self-shrinkers.}

In Theorem \ref{hint} it is described how the curve shortening flow acts on a self-shrinkers, but not how to find such 
special solutions nor the properties of them. In this section we forget the flow and search for static solutions, so 
that we do not have families but a single immersion satisfying eq. \eqref{shrinker1}.

Let $\gamma: I \to \R^2$ be a self-similar shrinking solution of the curve shortening flow that is 
parametrized by arc-length. Thus
\begin{equation}
\label{selfshrinker} \gamma'' = - \gamma^\perp = \langle \gamma, \gamma' \rangle \gamma' - \gamma.
\end{equation}

\begin{lem}
The only self-shrinkers (solution of eq. \eqref{selfshrinker}) that pass through the origin are the straight lines.
\end{lem}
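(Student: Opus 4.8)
The plan is to reduce the statement to the uniqueness theorem for ordinary differential equations. Equation \eqref{selfshrinker} has the form $\gamma'' = F(\gamma, \gamma')$ with $F(p,q) = \langle p, q\rangle q - p$, a polynomial — hence locally Lipschitz — vector field on $\R^2 \times \R^2$. Consequently a solution is completely determined by its initial data $(\gamma(s_0), \gamma'(s_0))$ at any point $s_0 \in I$, and the whole argument comes down to exhibiting a straight line with the same initial data.

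Concretely, suppose $\gamma(s_0) = 0$ for some $s_0$, and set $v := \gamma'(s_0)$; since $\gamma$ is parametrized by arc length, $|v| = 1$. First I would check that the line $\ell(s) := (s - s_0)v$ through the origin is itself a solution of \eqref{selfshrinker}: indeed $\ell'' = 0$, while $\langle \ell, \ell'\rangle \ell' - \ell = (s-s_0)|v|^2 v - (s-s_0)v = 0$ because $|v| = 1$. Since $\ell(s_0) = 0 = \gamma(s_0)$ and $\ell'(s_0) = v = \gamma'(s_0)$, the two solutions share the same initial data, so uniqueness (Picard--Lindel\"of) forces $\gamma \equiv \ell$ on the common interval, i.e. $\gamma$ traces a straight line through the origin.

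I do not expect a genuine obstacle here; the only points needing care are routine. One is that $\ell$ is admissible as a self-shrinker, which is automatic since it is arc-length parametrized ($|\ell'| = |v| = 1$). The other is that the arc-length constraint $|\gamma'| \equiv 1$ imposed on $\gamma$ is consistent with \eqref{selfshrinker}; this is seen by differentiating $|\gamma'|^2$ and observing that the resulting scalar equation is satisfied by the constant $1$. The entire content of the lemma is the elementary observation that lines through the origin solve the equation, after which ODE uniqueness finishes the job.

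As an alternative, purely geometric route I would argue via the signed curvature $\kappa$. Writing $\gamma'' = \kappa N$ with $N$ the unit normal, equation \eqref{selfshrinker} gives $\kappa = -\langle \gamma, N\rangle$; differentiating and using the Frenet relation $N' = -\kappa \gamma'$ yields the linear equation $\kappa' = \langle \gamma, \gamma'\rangle\, \kappa$. At a point where $\gamma = 0$ one has $\kappa = -\langle \gamma, N\rangle = 0$, so by uniqueness for this scalar linear ODE $\kappa \equiv 0$, and a plane curve of vanishing curvature is a straight line.
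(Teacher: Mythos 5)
Your main argument is exactly the paper's proof: exhibit the line $\ell(s)=(s-s_0)v$ as a solution of \eqref{selfshrinker} with the same initial data $(\gamma(s_0),\gamma'(s_0))=(0,v)$ and invoke uniqueness for the initial value problem, which you additionally justify by noting that $F(p,q)=\langle p,q\rangle q-p$ is locally Lipschitz. Your alternative route via the signed curvature ($\kappa=-\langle\gamma,N\rangle$, $\kappa'=\langle\gamma,\gamma'\rangle\kappa$, so $\kappa(s_0)=0$ forces $\kappa\equiv 0$) is also correct and is not in the paper, but the core of your proposal coincides with the paper's argument.
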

\begin{proof}
If $\gamma(t_0) = 0$ and $\gamma'(t_0)=\overrightarrow{v}$, then $\|\overrightarrow{v}\|=1$, for the curve is 
parametrized by 
arc-length. It follows that $\beta(t) = (t-t_0)\overrightarrow{v}$ satisfies
$$
\beta''(t)=0,
$$
and
$$
 \langle \beta, \beta' \rangle \beta' - \beta = (t-t_0)\overrightarrow{v} - (t-t_0)\overrightarrow{v}=0.
$$
Therefore $\beta(t)$, $t\in \R$, is a solution to \eqref{selfshrinker} with $\beta(t_0)=\gamma(t_0)=0$ and 
$\beta'(t_0)=\gamma'(t_0)=\overrightarrow{v}$. From the uniqueness of the solutions to the associated (with eq.
\eqref{selfshrinker}) initial value problem it follows that $\gamma(t)=\beta(t)$.
\end{proof}

The straight lines are static under the curve shortening flow. As the other solutions do not cross the origin, 
we can write them in polar coordinates. We follow calculating 
$$
\langle \gamma, \gamma\rangle'' = 2 \langle \gamma'',\gamma \rangle + 2 \langle \gamma', \gamma' \rangle,
$$
and, writing $\alpha = \langle \gamma, \gamma\rangle$, we get in view of eq. \eqref{selfshrinker}
\begin{equation}
\label{alpha}\alpha'' - \frac{(\alpha')^2}{2} +2 \alpha = 2.
\end{equation}

The associated initial value problem admits an unique solution. Further there are solutions of eq. \eqref{alpha} that 
are always positive:

\begin{lem}
A solution of eq. \eqref{alpha} with $0 < \alpha(0) < 1$ and $\alpha'(0)=0$ is strictly positive.
\end{lem}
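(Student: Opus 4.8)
The plan is to reduce the second-order autonomous equation \eqref{alpha} to a first-order relation by exhibiting a conserved quantity, and then to read off the sign of $\alpha$ from a phase-plane argument. First I would look for a first integral. Guided by the linear structure that appears after the substitution $u=(\alpha')^2$ (which formally turns \eqref{alpha} into $du/d\alpha - u = 4-4\alpha$, solved by $u=4\alpha+Ce^{\alpha}$), I would simply define
$$H(t):=\bigl[(\alpha')^2-4\alpha\bigr]e^{-\alpha}$$
and verify directly that $H'(t)=0$ along any solution of \eqref{alpha}: differentiating gives $H'=e^{-\alpha}\alpha'\bigl[2\alpha''-4-(\alpha')^2+4\alpha\bigr]$, and substituting $2\alpha''=(\alpha')^2-4\alpha+4$ from \eqref{alpha} makes the bracketed factor vanish identically. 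Thus $H$ is constant; evaluating it at $t=0$ with $\alpha'(0)=0$ and $a:=\alpha(0)$ gives $H\equiv -4a\,e^{-a}$, i.e.
$$(\alpha'(t))^2 = 4\alpha(t)-4a\,e^{\alpha(t)-a} =: f(\alpha(t)).$$

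With this first integral in hand the positivity becomes a sign analysis of $f$. Since $(\alpha')^2\ge 0$, the solution is trapped in the region $\{f\ge 0\}$, so it suffices to locate the connected component of this region containing the initial value $a$. I would record three facts about $f$: it is strictly concave, because $f''(\alpha)=-4a\,e^{\alpha-a}<0$; it vanishes at $\alpha=a$, since $f(a)=4a-4a=0$; and its slope there is positive, $f'(a)=4-4a=4(1-a)>0$, which is exactly where the hypothesis $\alpha(0)<1$ enters. Concavity means $f$ lies below its tangent line at $a$, so for every $\alpha<a$ one has $f(\alpha)\le f'(a)(\alpha-a)<0$.

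Consequently $f(\alpha)<0$ for all $\alpha<a$, and since $(\alpha')^2=f(\alpha)$ can never be negative, the solution cannot reach any value below $a$; that is, $\alpha(t)\ge a=\alpha(0)>0$ throughout the interval of existence, where $a>0$ uses the hypothesis $\alpha(0)>0$. This establishes strict positivity. (In fact concavity together with $f\to-\infty$ as $\alpha\to+\infty$ shows $f$ has a second, larger zero, so $\alpha$ oscillates between two positive values, but only the lower bound is needed here.)

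The step I expect to require the most care is the passage from the algebraic relation $(\alpha')^2=f(\alpha)$ to the dynamical conclusion that $\alpha$ remains in $[a,\infty)$. I must check that the turning point $\alpha=a$, where $\alpha'=0$, is not a stationary point of the flow at which the solution could stall: indeed $\alpha''(0)=2(1-a)>0$ by \eqref{alpha}, so the solution moves off into $\{f>0\}$. The confinement argument must also remain valid across such turning points, and this is precisely what the conserved-quantity formulation handles automatically, since $H$ is smooth and constant irrespective of the sign of $\alpha'$.
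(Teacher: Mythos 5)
Your proof is correct, but it follows a genuinely different route from the paper's. The paper's own argument is a time-reversal symmetry argument: since $\alpha''(0)=2-2\alpha(0)>0$ (this is where the hypothesis $\alpha(0)<1$ enters there), $t=0$ is a strict local minimum; if the solution ever returned to the level $\alpha(0)$, there would be an intermediate local maximum $t_0$, and because \eqref{alpha} is autonomous and invariant under $t\mapsto -t$, uniqueness forces $\alpha(t_0+t)=\alpha(t_0-t)$, so the solution reflects at $t_0$, extends to a periodic solution, and has minimum exactly $\alpha(0)>0$. You instead exhibit the first integral $H=\bigl[(\alpha')^2-4\alpha\bigr]e^{-\alpha}$ (your verification that $H'\equiv 0$ is correct) and reduce the lemma to a sign analysis of $f(\alpha)=4\alpha-4a\,e^{\alpha-a}$: strict concavity together with $f(a)=0$ and $f'(a)=4(1-a)>0$ (the parallel point where $\alpha(0)<1$ is used) gives $f<0$ on $(-\infty,a)$, so the pointwise identity $(\alpha'(t))^2=f(\alpha(t))\ge 0$ traps the orbit in $[a,\infty)$. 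The trade-off: the paper's reflection argument needs no conserved quantity but is informal as written (one must take $t_0$ to be the first critical point after $0$, treat negative times symmetrically, and justify the periodic extension), whereas your confinement argument is airtight with no case analysis and yields extra information for free---since $f$ is concave with $f'(a)>0$ and $f\to-\infty$, it has a second root $b>a$, so the solution oscillates between two positive values, recovering the periodic behavior the paper infers from reflection and observes numerically. Your closing worry about stalling at turning points is, as you yourself note, unnecessary for the positivity claim: values of $\alpha$ with $f(\alpha)<0$ are simply unreachable, regardless of the dynamics near $\alpha'=0$.
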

\begin{proof}
First note that $\alpha(t)$ has a local minimum at $t=0$, so that if there is $t_1 \in D(\alpha)$ 
such that  
$\alpha(t_1)\leq \alpha (0)$, then there would be a local maximum at some $t_0 \in (0, t_1)$. But
$$
\beta(t) := \alpha(t_0 - t),
$$
also satisfies eq. \eqref{alpha} and $\beta(0) = \alpha(t_0)$, $\beta'(0) = \alpha'(t_0)$ and $\beta''(0) = 
\alpha''(t_0)$. Thus a solution of eq. \eqref{alpha} would exist for all $t \in \R$ and be given by
$$
\alpha^*(t) = \left\{\begin{array}{ll}
     \alpha(t- 2nt_0), &t\in [2nt_0, (2n+1)t_0)\\
     \beta(t- 2nt_0) = \alpha((2n+1)t_0 - t), &t\in [(2n+1)t_0, (2n+2)t_0)
                   \end{array}
 \right.
$$
so that min $\alpha(t) = \alpha(0)$.
\end{proof}

The figure below illustrates the construction a the solution of eq. \eqref{alpha}. Further the periodicity of the 
solution is expected from the actual form of the Abresch \& Langer curves.
\vspace{-9mm}
\begin{figure}[h]
 \label{solalpha}\includegraphics[height=50mm]{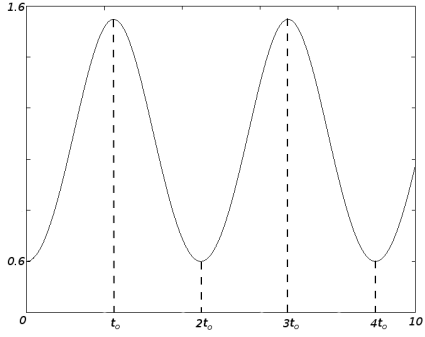}
  \caption{A solution $\alpha(t)$, with $\alpha(0) = 0.6$ and $\alpha'(0) = 0$}
\end{figure}

For every solution of eq. \eqref{alpha} that is positive, it is possible to define a function $u=\sqrt{\alpha}$ and 
write the self-shrinker in polar coordinates:
\begin{equation}
\label{polarplane}\gamma (t)= u(t)(\mbox{cos}(\theta(t)), \mbox{sin}(\theta(t))).
\end{equation}

Beyond this $u=\sqrt{\alpha}$ implies
$$
\alpha' = 2uu' \hspace{13mm} \mbox{and} \hspace{13mm} \alpha'' = 2u''u+2(u')^2
$$
so that equation \eqref{alpha} turns into
\begin{equation}
\label{uuu} u''u + (u')^2 - [u']^2u^2 + u^2 = 1.
\end{equation}

Further it holds
\begin{align}
\label{1}\gamma' =& u'(\mbox{cos}\theta, \mbox{sin}\theta) + u\theta'(-\mbox{sin}\theta, \mbox{cos}\theta)\\
\label{2}\gamma'' =& [u''-u[\theta']^2](\mbox{cos}\theta, \mbox{sin}\theta) + [2u'\theta'+u\theta''](-\mbox{sin}\theta, \mbox{cos}\theta)\\
\label{3}-\gamma^\perp =& [u[u']^2-u](\mbox{cos}\theta, \mbox{sin}\theta) + [u^2u'\theta'](-\mbox{sin}\theta, \mbox{cos}\theta),
\end{align}
Therefore equation \eqref{selfshrinker} holds if, and only if, both equations hold:
\begin{align}
\label{A} u''-u[\theta']^2 = u[u']^2-u,\\
\label{B} 2u'\theta'+ u\theta''=u^2u'\theta'
\end{align}
where $u$ is a known function and, recalling that $\|\gamma'\|=1$,
\begin{equation}
 \label{C} [\theta']^2 = \frac{1-[u']^2}{u^2}
\end{equation}
and
\begin{equation}
\label{thetasolution}\theta= \int \frac{4\alpha(t)-(\alpha'(t))^2}{4\alpha^2(t)}dt.
\end{equation}

In figure \ref{Abresch Langer} there are plots of self-shrinkers  constructed from numerical solutions of eqs. 
\eqref{alpha} and \eqref{thetasolution}. 
It is not clear which initial conditions generate closed curves.
\begin{figure}[h]
\includegraphics[width=\linewidth]{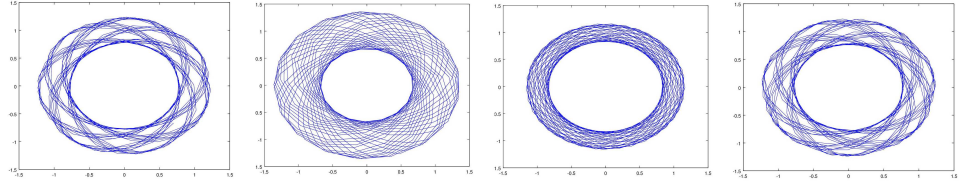}
\caption{Noncompact Abresch \& Langer Curves.}
 \label{Abresch Langer}
\end{figure}

It is not hard to see that any solutions $u$ and $\theta$ of equations \eqref{uuu} and \eqref{C} also satisfy eq. 
\eqref{A} and \eqref{B} and thus generate self-shrinkers of the curve shortening flow through equation 
\eqref{polarplane}. Thus we have proved:

\begin{thm}\label{secondary}
A curve $\M$ parametrized by $\gamma: I \to \R^2$, $\gamma(t)=\sqrt{\alpha(t)}(\mbox{cos}(\theta(t)), 
\mbox{sin}(\theta(t)))$ is a self-shrinker of the curve shortening flow if, and only if,
\begin{enumerate}
 \item it is a straight line or
 \item $\alpha(t) > 0$ for all $t \in I$ and 
\begin{align*}
&\alpha'' - \frac{(\alpha')^2}{2} +2 \alpha = 2,\\
&\theta= \int \frac{4\alpha(t)-(\alpha'(t))^2}{4\alpha^2(t)}dt.
\end{align*}
\end{enumerate}
\end{thm}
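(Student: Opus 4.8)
The plan is to prove the equivalence by reducing \eqref{selfshrinker} to the scalar data $(\alpha,\theta)$ through the moving-frame identities \eqref{1}--\eqref{3} that are already in place. The backbone is the dichotomy furnished by the Lemma on self-shrinkers through the origin: any self-shrinker either meets the origin, and is then one of the straight lines of alternative~(1), or it avoids the origin, and then $\alpha=\langle\gamma,\gamma\rangle$ is strictly positive, placing it under alternative~(2). For the reverse direction the straight lines (necessarily through the origin) are seen to solve \eqref{selfshrinker} exactly as in the proof of that Lemma, so only the positive-$\alpha$ branch requires genuine work.

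For the forward implication I would take a self-shrinker $\gamma$ that is not a straight line; by the contrapositive of that Lemma it avoids the origin, so $\alpha>0$ on $I$. Differentiating $\alpha=\langle\gamma,\gamma\rangle$ twice and substituting \eqref{selfshrinker}, together with $\langle\gamma,\gamma'\rangle=\alpha'/2$ and $\langle\gamma',\gamma'\rangle=1$, reproduces \eqref{alpha} exactly as in the derivation preceding it. Since $\alpha>0$ the polar form \eqref{polarplane} with $u=\sqrt{\alpha}$ is available, and imposing the arc-length normalization $\|\gamma'\|=1$ on \eqref{1} yields $(u')^2+u^2(\theta')^2=1$, i.e. the constraint \eqref{C}. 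Rewriting \eqref{C} via $u^2=\alpha$ and $u'=\alpha'/(2\sqrt{\alpha})$ gives $(\theta')^2=(4\alpha-(\alpha')^2)/(4\alpha^2)$; fixing the orientation of $\theta$ fixes the sign of $\theta'$, and integrating the resulting expression for $\theta'$ produces \eqref{thetasolution}.

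For the converse I would set $u=\sqrt{\alpha}$, define $\gamma=u(\cos\theta,\sin\theta)$, and check \eqref{selfshrinker} through \eqref{1}--\eqref{3}, which split it into the scalar equations \eqref{A} and \eqref{B}. Rewriting \eqref{alpha} in terms of $u$ gives \eqref{uuu}, while \eqref{thetasolution} is exactly \eqref{C}; in particular $\|\gamma'\|=1$, so $\gamma$ is parametrized by arc length. Multiplying \eqref{A} by $u$ and substituting $u^2(\theta')^2=1-(u')^2$ from \eqref{C} turns \eqref{A} into \eqref{uuu} identically, so \eqref{A} holds. Equation \eqref{B} is the single point that is not a mere substitution: differentiating the constraint \eqref{C}, then eliminating $u''$ by means of \eqref{A} (which gives $u''=u(\theta')^2(1-u^2)$), and finally dividing by $u\theta'$ collapses the result precisely to \eqref{B}.

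I expect \eqref{B}, and with it the admissibility of dividing by $\theta'$, to be the main obstacle. Both are controlled by a first integral of \eqref{alpha}: viewing $\alpha'$ as a function $p(\alpha)$ and setting $q=p^2$ turns \eqref{alpha} into the linear equation $dq/d\alpha-q=4-4\alpha$, whence $(\alpha')^2-4\alpha=Ce^{\alpha}$. The genuine (non-line) self-shrinkers correspond to $C<0$ --- for instance the data $0<\alpha(0)<1$, $\alpha'(0)=0$ of the positivity Lemma give $C=-4\alpha(0)e^{-\alpha(0)}<0$ --- and then $4\alpha-(\alpha')^2=-Ce^{\alpha}>0$. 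This strict positivity simultaneously makes the integrand of \eqref{thetasolution} real, forces $\theta'\neq 0$ so that the division leading to \eqref{B} is legitimate, and rules out degeneracy of the construction. The only remaining bookkeeping is the sign of the square root defining $\theta$, which does not affect whether \eqref{selfshrinker} holds, since only $(\theta')^2$ enters \eqref{A} and \eqref{B} is preserved under $\theta'\mapsto-\theta'$.
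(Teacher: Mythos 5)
Your proposal is correct and takes essentially the same route as the paper: the dichotomy supplied by the origin Lemma, the derivation of \eqref{alpha} and of \eqref{C}--\eqref{thetasolution} from the arc-length condition in polar coordinates, and the reduction of \eqref{selfshrinker} to the pair \eqref{A}--\eqref{B} via \eqref{1}--\eqref{3}. The only difference is that you spell out what the paper dismisses as ``not hard to see''---the check that \eqref{uuu} and \eqref{C} imply \eqref{A} and \eqref{B}, together with the first integral $(\alpha')^2-4\alpha=Ce^{\alpha}$ showing $\theta'$ never vanishes on a non-line self-shrinker, so the division by $u\theta'$ is legitimate---which strengthens rather than changes the argument.
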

\end{section}

\begin{section}{Plane self-shrinkers.}
The advantage of the method in the previous section is that it can be generalized to spatial self-shrinking 
curves. Consider now a self-similar solution of the curve shortening flow $\gamma: I \to \R^3$ that is parametrized by 
arc-length, then $\alpha = \langle \gamma, \gamma\rangle$ also satisfies eq. \eqref{alpha}. Denoting $u=\sqrt{\alpha}$ 
and taking a positive solution O.D.E. \eqref{alpha} one can write the self-shrinker in spherical coordinates:
$$
\gamma (t) = u(\cos \theta(t) \sin \varphi(t), \sin \theta(t) \sin \varphi(t), \cos\varphi(t)).
$$
 
 We use the following moving frame to calculate $\gamma''$ and $\gamma^\perp$:
 $$
 X=\left(\begin{array}{c}
          \cos \theta \sin\varphi\\
          \sin \theta \sin\varphi\\
          \cos\varphi
         \end{array}
\right),
 \hspace{7mm}
 \frac{\partial X}{\partial \theta}=\left(\begin{array}{c}
          -\sin \theta \sin\varphi\\
          \cos \theta \sin\varphi\\
          0
         \end{array}
\right),
 \hspace{7mm}
\frac{\partial X}{\partial \varphi}=\left(\begin{array}{c}
          -\cos \theta \cos\varphi\\
          -\sin \theta \cos\varphi\\
          -\sin\varphi
         \end{array}
\right).
 $$
 
 Then:
 $$
 \gamma'(t)= u'X+ u\theta' \frac{\partial X}{\partial \theta} + u\varphi'\frac{\partial X}{\partial \varphi},
 $$
 \begin{align*}
  \gamma''(t)=& \left[u'' - u[\theta']^2 \sin^2\varphi - u[\varphi']^2 \right]X + \left[2u'\varphi' - u[\theta']^2 
\sin\varphi \cos\varphi + u\varphi''\right] \frac{\partial X}{\partial 
\varphi}\\
&+  \left[2u'\theta' + u\theta''+ u\theta'\varphi'\frac{\cos \varphi}{\sin \varphi} + 
u\varphi'\theta'\frac{\cos \varphi}{\sin \varphi} \right]\frac{\partial X}{\partial \theta}
\end{align*}
 and
 $$
 \gamma^\perp = uX - uu'\left[u'X+u\theta'\frac{\partial X}{\partial\theta}+u\varphi'\frac{\partial X}{\partial\varphi}\right].
 $$

In this fashion eq. \eqref{selfshrinker} implies that
\begin{align*}
& u'' - \sin^2 \varphi u [\theta']^2 - u[\varphi']^2 = -u + u[u']^2,\\
& 2u' \theta' + u\theta'' + u \theta' \varphi' \frac{\cos \varphi}{\sin \varphi} + u \varphi' \theta' \frac{\cos \varphi}{\sin \varphi}= u^2 u' \theta',\\
& 2u'\varphi' - u[\theta']^2 \sin \varphi \cos \varphi + u \varphi'' = u^2 u' \varphi'
\end{align*}
and, as we chose a parametrization by arc-length,
$$
[u']^2 + [u\theta']^2\sin^2\varphi + [u\varphi']^2 = 1.
$$
Numerical evaluation of these equations indicate that all self-shrinkers in $\R^3$ lie in planes:
\begin{figure}[h]
\centering
\begin{subfigure}{.5\textwidth}
  \centering
  \includegraphics[width=\linewidth]{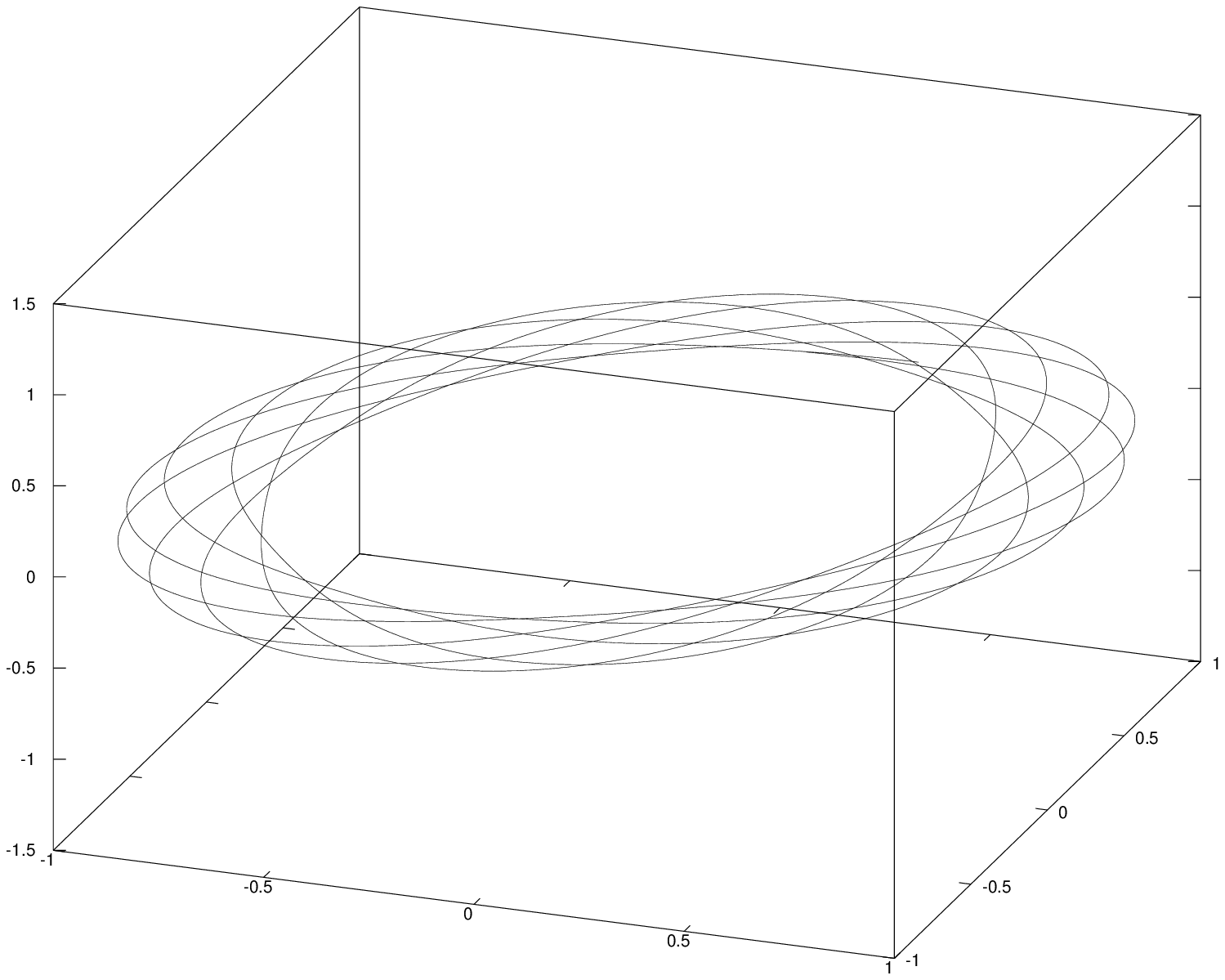}
\end{subfigure}%
\begin{subfigure}{.5\textwidth}
  \centering
  \includegraphics[width=\linewidth]{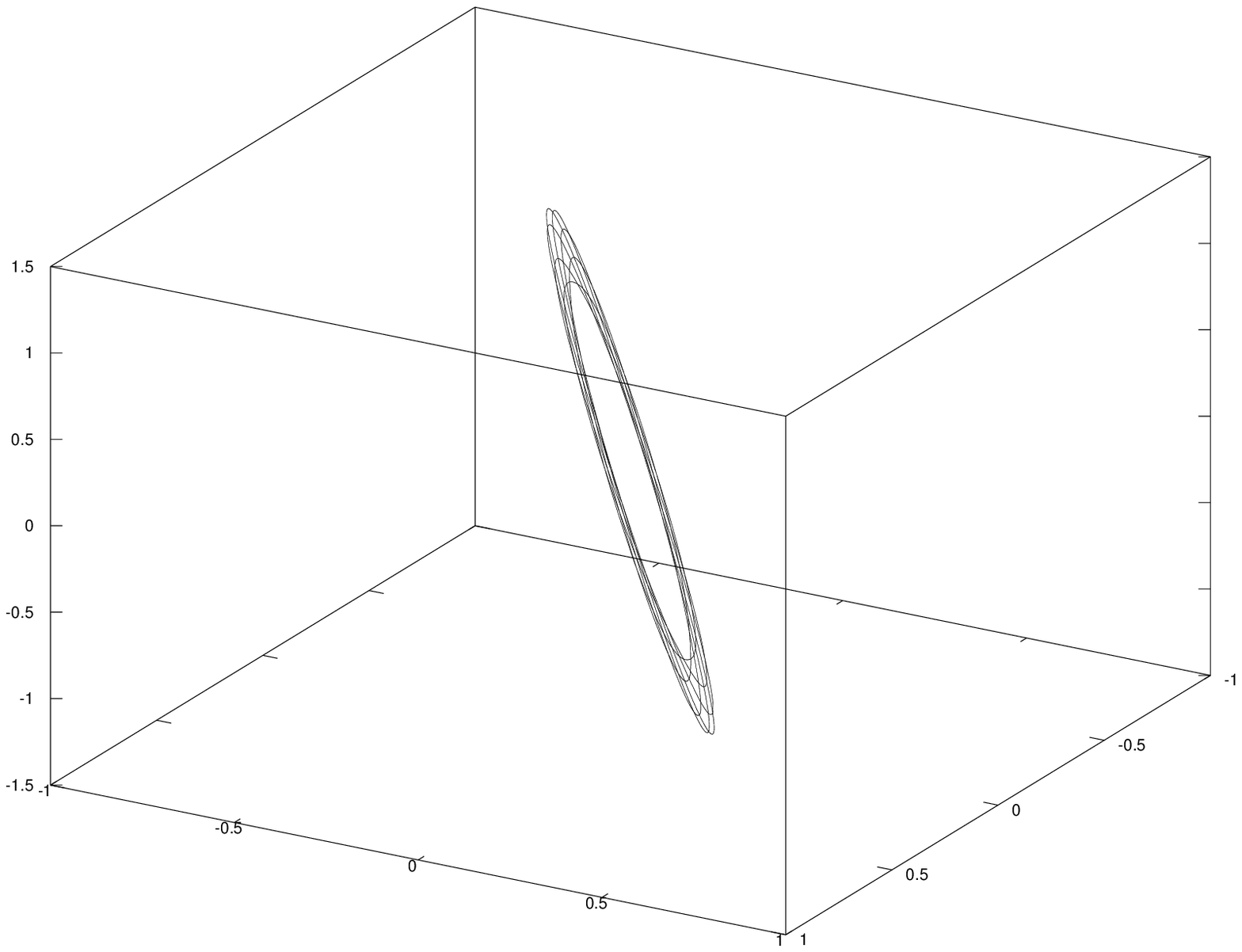}
\end{subfigure}
\caption{Two plots of the same self-shrinker from different angles.}
\end{figure}
\end{section}

\begin{section}{Self-shrinking curves in $\R^n$}

In this section we prove:
\begin{thm}
 Every self-shrinking solution of the curve shortening flow $\gamma: I \to \R^n$ lies in a plane.
\end{thm}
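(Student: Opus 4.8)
The plan is to exploit the one structural feature of the self-shrinker relation
$$\gamma'' = \langle \gamma, \gamma' \rangle \gamma' - \gamma,$$
namely that $\gamma''$ always lies in the span of $\gamma$ and $\gamma'$. This is exactly the kind of linear-dependence condition that forces planarity, and the cleanest way to cash it in is to test $\gamma$ against directions orthogonal to its initial osculating data. So I would fix a base point $t_0 \in I$, let $W$ be the orthogonal complement of $\mathrm{span}\{\gamma(t_0), \gamma'(t_0)\}$ (a subspace of $\R^n$ of dimension at least $n-2$), and aim to show that $\gamma(t) \perp W$ for every $t \in I$. This statement is equivalent to $\gamma$ lying in the fixed subspace $W^\perp$, whose dimension is at most two.

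The engine is the uniqueness for the associated initial value problem already invoked in the earlier lemmas. For a fixed $w \in W$, set $\phi(t) := \langle \gamma(t), w \rangle$. By the choice of $W$ we have the initial conditions $\phi(t_0) = 0$ and $\phi'(t_0) = \langle \gamma'(t_0), w \rangle = 0$. Differentiating twice and substituting the self-shrinker equation gives
$$\phi'' = \langle \gamma'', w \rangle = \langle \gamma, \gamma' \rangle \phi' - \phi,$$
a linear homogeneous second-order ODE for $\phi$ whose coefficient $\langle \gamma, \gamma' \rangle$ is a known function of $t$. The only solution of this equation with vanishing value and derivative at $t_0$ is $\phi \equiv 0$. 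Hence $\langle \gamma(t), w \rangle = 0$ for all $t$ and all $w \in W$, so $\gamma(t) \in W^\perp$ throughout, and $W^\perp$ is a plane (through the origin, as is natural for self-shrinkers).

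Conceptually the same fact can be packaged through the bivector $P := \gamma \wedge \gamma'$: the equation yields $P' = \gamma \wedge \gamma'' = \langle \gamma, \gamma' \rangle P$, so $P(t) = \exp\!\big(\int_{t_0}^t \langle \gamma, \gamma' \rangle\big)\, P(t_0)$ keeps a fixed direction in $\Lambda^2 \R^n$, which again pins the spanning plane. I expect no genuine analytic obstacle here; the only point requiring care is the degenerate configuration in which $\gamma(t_0)$ and $\gamma'(t_0)$ are linearly dependent (equivalently $P(t_0) = 0$), so that $W^\perp$ is a line rather than a genuine two-plane. The orthogonal-complement argument absorbs this case automatically, since it only ever uses $\dim W \geq n-2$ and concludes $\gamma \subset W^\perp$ with $\dim W^\perp \leq 2$. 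Finally, the argument is purely algebraic in the self-shrinker relation and never uses the dimension, so it applies verbatim in $\R^n$.
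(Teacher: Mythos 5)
Your proof is correct, but it takes a genuinely different route from the paper's. The paper works with the span of $\gamma'(t_0)$ and $\gamma''(t_0)$: it computes $\gamma'''$ from eq. \eqref{selfshrinker}, sets up the linear system \eqref{system} for coefficients $r,s$ so that $v=r\gamma'+s\gamma''$ is a constant vector field along the curve, and then argues that these constant vectors span a fixed plane containing every $\gamma'(t)$, so the curve lies in a plane; this requires a preliminary case split to ensure $\gamma'(t_0)\neq 0$ and $\gamma''(t_0)\neq 0$ (straight lines and points are treated separately). You instead work with the span of $\gamma(t_0)$ and $\gamma'(t_0)$ and run the dual argument: rather than building constant vectors inside the plane, you kill the components orthogonal to it, observing that $\phi=\langle\gamma,w\rangle$ satisfies the scalar linear ODE $\phi''=\langle\gamma,\gamma'\rangle\phi'-\phi$ with vanishing initial data, hence $\phi\equiv 0$ by uniqueness, which holds on all of $I$ because the equation is linear with smooth coefficients. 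What your approach buys: it avoids third derivatives and the $2\times 2$ system, it needs no case analysis (degenerate configurations where $\gamma(t_0)$ and $\gamma'(t_0)$ are dependent are absorbed automatically, as you note), and it yields the slightly sharper conclusion that the plane passes through the origin, which is the natural normalization for self-shrinkers, whereas the paper's argument a priori only confines $\gamma'$ to a linear plane and hence $\gamma$ to an affine one. What the paper's approach buys: it identifies the plane as the osculating plane spanned by the tangent and curvature vectors, and the constant fields $r\gamma'+s\gamma''$ are first integrals of some independent interest. Both proofs ultimately rest on ODE uniqueness, and your bivector remark, that $P=\gamma\wedge\gamma'$ satisfies $P'=\langle\gamma,\gamma'\rangle P$, is a tidy one-line summary of why the mechanism works.
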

\begin{proof}
 First of all let $\gamma$ be parametrized by arc-length. Then, by eq. \eqref{selfshrinker},
 \begin{align*}
 \gamma'''=&\gamma' - \gamma'+ \langle \gamma, \gamma''  \rangle \gamma' + \langle \gamma, \gamma'  \rangle \gamma''\\
 =& -\langle \gamma, \gamma \rangle \gamma' + \langle \gamma, \gamma' \rangle^2 \gamma' + \langle \gamma, \gamma' 
\rangle \gamma''\\
 =& -\|\gamma''\|^2 \gamma' + \langle \gamma, \gamma' \rangle \gamma''.
 \end{align*}

If $r,s:(a,b) \to \R$ are solutions to
\begin{equation}
\label{oioi} (r\gamma' +s\gamma'')'=0,
\end{equation}
then the vector field $v(t) = r(t)\gamma'(t) +s(t)\gamma''(t)$ over $\gamma(a,b)$ is a constant vector. Note that eq. 
\eqref{oioi} 
implies
$$
r'\gamma' +s'\gamma'' + r\gamma'' +s(-\|\gamma''\|^2 \gamma' + \langle \gamma, \gamma' \rangle \gamma'')=0.
$$
So that, if $\gamma' \neq 0$ and $\gamma'' \neq 0$, $r$ and $s$ satisfy the following O.D.E system:
\begin{equation}
\label{system}\left\{
\begin{matrix}
r'(t) = s(t)(\langle \gamma, \gamma \rangle - \langle \gamma, \gamma' \rangle^2),\\
s'(t) = -s(t)\langle \gamma, \gamma' \rangle -r(t).
\end{matrix}
\right.
\end{equation}

The associated initial value problem has a unique solution for every fixed pair of values for $r(t_0)$ and $s(t_0)$, 
which can be extended for the whole domain of $\gamma$, and any solution to eq. \eqref{system} makes eq. \eqref{oioi} 
hold. Thus $r\gamma'+s\gamma''$ is a constant vector. Further, if the curve defined by $\gamma$ is not a straight line 
or is degenerate to a point, then there is $t_0\in (a,b)$ such that $\gamma'(t_0) \neq 0$ and $\gamma''(t_0) \neq 0$. 
Letting $r(t_0)$ and $s(t_0)$ vary makes 
$v(t_0)=r(t_0)\gamma'(t_0) +s(t_0)\gamma''(t_0)$ equal to any vector in the plane defined by 
$\gamma'(t_0)$, $\gamma''(t_0)$ and the origin.

Furthermore $v(t)=r(t)\gamma'(t) +s(t)\gamma''(t) = r(t_0)\gamma'(t_0) +s(t_0)\gamma''(t_0)=v(t_0)$ for all $t \in 
(a,b)$. Thence the family of $v(t)$ thus obtained spans the same plane for any $t$. There are linearly independent 
vectors in this family, so that $\gamma'(t)$ can be written as a linear combination of two vectors of the like, 
then $\gamma'(t)$ is always on this plane and curve lies in a plane.
\end{proof}
\end{section}

\begin{section}{Self-expanders}
Let $\gamma: I \to \R^2$ be a self-similar expanding solution of the curve shortening flow that is parametrized by 
arc-length. Then
\begin{equation}
\label{selfexpander} \gamma'' = \gamma^\perp = \gamma -\langle \gamma, \gamma' \rangle \gamma'
\end{equation}

Writing $\alpha = \langle \gamma, \gamma\rangle$ we get
\begin{equation}
\label{alpha2}\alpha'' + \frac{(\alpha')^2}{2} -2 \alpha = 2.
\end{equation}

The associated initial value problem admits an unique solution.

\begin{lem}
A solution of eq. \eqref{alpha2} with $0 < \alpha(0)$ and $\alpha'(0)=0$ is strictly positive.
\end{lem}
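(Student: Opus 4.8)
The plan is to reduce the second-order equation \eqref{alpha2} to a first integral and read positivity directly off it. Viewing $(\alpha')^2$ as a function of $\alpha$ turns \eqref{alpha2} into a \emph{linear} first-order ODE with integrating factor $e^{\alpha}$; concretely, I expect \eqref{alpha2} to admit the conserved quantity
$$Q(t) := e^{\alpha}\left((\alpha')^2 - 4\alpha\right).$$
Differentiating and substituting $\alpha'' = 2 + 2\alpha - (\alpha')^2/2$ from \eqref{alpha2}, the bracketed factor collapses, giving $Q' \equiv 0$; note that no division by $\alpha'$ is involved, so the identity is valid even where $\alpha' = 0$. Evaluating at $t=0$, where $\alpha'(0)=0$, fixes $Q \equiv Q(0) = -4\alpha(0)e^{\alpha(0)}$, and hence the algebraic relation
$$(\alpha')^2 = 4\alpha - 4\,\alpha(0)\,e^{\alpha(0)-\alpha}.$$

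From here positivity is immediate. Since the left-hand side is nonnegative throughout the domain, I would conclude $\alpha \geq \alpha(0)\,e^{\alpha(0)-\alpha}$, i.e. $\alpha\,e^{\alpha} \geq \alpha(0)\,e^{\alpha(0)}$. Because $x \mapsto x e^{x}$ is strictly increasing on $[0,\infty)$ (its derivative $(1+x)e^{x}$ is positive there), this forces $\alpha(t) \geq \alpha(0) > 0$ for all $t$ in the domain, with equality only at the turning point $t=0$. In particular $\alpha$ is strictly positive, which is the claim; as a by-product the first integral shows $\alpha(0)$ is in fact the global minimum, so the expander solution is pushed upward.

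An alternative I would keep in reserve mirrors the reflection/maximum-principle argument used for the shrinker lemma. Since $\alpha'(0)=0$, equation \eqref{alpha2} gives $\alpha''(0) = 2 + 2\alpha(0) > 0$, so $t=0$ is a strict local minimum; by the invariance of \eqref{alpha2} under $t \mapsto -t$ together with uniqueness, $\alpha$ is even, so it suffices to treat $t \geq 0$. If $\alpha$ vanished first at some $t_*>0$, then, since $\alpha$ increases off the minimum at $0$ while $\alpha(t_*)=0<\alpha(0)$, it would attain an \emph{interior} maximum at some $t_m\in(0,t_*)$ with $\alpha(t_m)>0$, $\alpha'(t_m)=0$ and $\alpha''(t_m)\leq 0$; but \eqref{alpha2} forces $\alpha''(t_m)=2+2\alpha(t_m)>0$, a contradiction. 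The decisive structural point, and the reason no upper bound such as $\alpha(0)<1$ is needed here (in contrast to the shrinker), is precisely that $\alpha''=2+2\alpha>0$ at every positive critical point, ruling out any positive local maximum. I do not expect a genuine obstacle in either route: the only steps requiring care are the monotonicity of $x e^{x}$ in the first argument and the location of the interior maximum in the second; everything else is routine bookkeeping.
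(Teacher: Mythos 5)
Your proposal is correct, and its lead argument takes a genuinely different route from the paper, while your ``reserve'' argument is essentially the paper's own proof. The paper disposes of this lemma in one sentence: $\alpha$ has a local minimum at $0$, and there is no local maximum (or saddle point) $t_0$ with $\alpha(t_0)>0$, since at any critical point eq. \eqref{alpha2} forces $\alpha''(t_0)=2+2\alpha(t_0)>0$; your second route (evenness via $t\mapsto -t$ and uniqueness, an interior maximum on $(0,t_*)$, contradiction with the sign of $\alpha''$) is exactly this argument made explicit. Your first route is new relative to the paper and checks out: for $Q=e^{\alpha}\left((\alpha')^2-4\alpha\right)$ one has $Q'=e^{\alpha}\alpha'\left[(\alpha')^2-4\alpha+2\alpha''-4\right]$, and substituting $2\alpha''=4+4\alpha-(\alpha')^2$ from \eqref{alpha2} kills the bracket, so $(\alpha')^2=4\alpha-4\alpha(0)e^{\alpha(0)-\alpha}$ throughout the domain. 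This buys strictly more than the paper's argument: not just positivity but the sharp bound $\alpha(t)\ge\alpha(0)$ and an explicit first integral describing the phase portrait, at the cost of a computation; the paper's route is shorter, computation-free, and parallels its shrinker lemma. One small ordering point in your write-up: invoking monotonicity of $x\mapsto xe^{x}$ on $[0,\infty)$ to conclude $\alpha\ge\alpha(0)$ presupposes $\alpha\ge 0$. The fix is immediate and makes positivity even more direct: since $xe^{x}\le 0$ for $x\le 0$, the inequality $\alpha e^{\alpha}\ge\alpha(0)e^{\alpha(0)}>0$ already forces $\alpha>0$, and only then does monotonicity upgrade this to $\alpha\ge\alpha(0)$. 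Your structural remark at the end is also accurate: the reason the expander needs no upper bound like $\alpha(0)<1$, in contrast to the shrinker lemma for eq. \eqref{alpha}, is that there $\alpha''=2-2\alpha$ at critical points, which can be nonpositive, whereas here $\alpha''=2+2\alpha>0$ at every positive critical point.
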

\begin{proof}
Note that $\alpha$ has a local minimum at 0 and there is no local maximum $t_0$ (or saddle point) with $\alpha(t_0)>0$. 
\end{proof}

If a solution to eq. \eqref{alpha2} is positive, it is possible to define a function $u=\sqrt{\alpha}$ and write the
self-expander in polar coordinates:
\begin{equation}
\label{polarplane2}\gamma (t)= u(t)(\mbox{cos}(\theta(t)), \mbox{sin}(\theta(t))).
\end{equation}

Beyond this $u=\sqrt{\alpha}$ implies
$$
\alpha' = 2uu' \hspace{13mm} \mbox{and} \hspace{13mm} \alpha'' = 2u''u+2(u')^2
$$
so that equation \eqref{alpha2} turns into
\begin{equation}
\label{uuu2} u''u + (u')^2 + [u']^2u^2 - u^2 = 1
\end{equation}

Further equations \eqref{1}, \eqref{2} and \eqref{3} also hold. Therefore equation \eqref{selfexpander} implies that
\begin{align}
\label{A2} u''-u[\theta']^2 = -u[u']^2+u,\\
\label{B2} 2u'\theta'+ u\theta''=-u^2u'\theta'
\end{align}
where $u$ is a known function and $\theta$ is also given by eq. \eqref{C}, because the self-expander is parametrized by 
arc-length.

It is not hard to see that solutions to  equations \eqref{uuu2} and \eqref{C} also satisfy \eqref{A2} and 
\eqref{B2}. Therefore any solutions $u$ and $\theta$ of equations \eqref{uuu2} and \eqref{C} generate self-expanders of 
the curve shortening flow through equation \eqref{polarplane2}. Thus we proved:

\begin{thm}
A curve $\M$ parametrized by $\gamma: I \to \R^2$, $\gamma(t)=\sqrt{\alpha(t)}(\mbox{cos}(\theta(t)), 
\mbox{sin}(\theta(t)))$ is a self-expander of the curve shortening flow if, and only if,
\begin{enumerate}
 \item it is a straight line or
 \item $\alpha(t) > 0$ for all $t \in I$ and 
\begin{align*}
&\alpha'' + \frac{(\alpha')^2}{2} -2 \alpha = 2,\\
&[\theta']^2 = \frac{1-[u']^2}{u^2}.
\end{align*}
\end{enumerate}
\end{thm}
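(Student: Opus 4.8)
The plan is to mirror the proof of Theorem \ref{secondary} for self-shrinkers, replacing the defining equation \eqref{selfshrinker} by \eqref{selfexpander} throughout, and to establish both directions of the equivalence. First I would dispose of case (1): a straight line through the origin, $\beta(t) = (t - t_0)\vec{v}$ with $\|\vec{v}\| = 1$, has $\beta'' = 0$ and $\gamma - \langle \gamma, \gamma' \rangle \gamma' = (t-t_0)\vec{v} - (t-t_0)\vec{v} = 0$, so it solves \eqref{selfexpander}; conversely, any self-expander meeting the origin shares its position and velocity there with such a line, hence coincides with it by uniqueness of solutions of the initial value problem associated to \eqref{selfexpander}. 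This is the self-expander analogue of the lemma identifying origin-crossing self-shrinkers as straight lines, and it shows that every self-expander not of type (1) avoids the origin and therefore admits the polar representation \eqref{polarplane2} with $u = \sqrt{\alpha} > 0$, the strict positivity being exactly the content of the Lemma just proved.

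For the forward direction I would, given such a $\gamma$, compute $\alpha = \langle \gamma, \gamma \rangle$ and differentiate twice, using \eqref{selfexpander} to arrive at \eqref{alpha2}. Expanding $\gamma$ in the moving frame $(\cos\theta,\sin\theta)$, $(-\sin\theta,\cos\theta)$ by means of the kinematic identities \eqref{1}, \eqref{2}, \eqref{3} — which depend only on \eqref{polarplane2} and not on which self-similar equation is imposed — the relation \eqref{selfexpander} splits into its normal and tangential components, namely \eqref{A2} and \eqref{B2}, while the arc-length normalization $\|\gamma'\| = 1$ yields \eqref{C}. Thus $\alpha$ satisfies \eqref{alpha2} and $\theta$ satisfies \eqref{C}, which is precisely condition (2).

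For the converse I would start from a positive solution $\alpha$ of \eqref{alpha2} together with $\theta$ satisfying \eqref{C}, set $u = \sqrt{\alpha}$, and define $\gamma = u(\cos\theta, \sin\theta)$ as in \eqref{polarplane2}. The substitution $u = \sqrt{\alpha}$ converts \eqref{alpha2} into \eqref{uuu2}, and eliminating $[\theta']^2$ from \eqref{uuu2} via \eqref{C} returns the normal equation \eqref{A2} directly, giving $u'' = u([\theta']^2 - (u')^2 + 1)$. Since \eqref{C} is literally the statement $\|\gamma'\| = 1$, the curve $\gamma$ is parametrized by arc length. It then remains only to check that \eqref{B2} holds as well, for then \eqref{selfexpander} is satisfied frame-wise and $\gamma$ is a genuine self-expander.

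The hard part will be this verification of the tangential equation \eqref{B2}, the step the proof elides as ``not hard to see''. I would obtain it by differentiating the constraint \eqref{C} in $t$ and substituting the value of $u''$ furnished by \eqref{A2} (equivalently \eqref{uuu2}) to remove the second derivative of $u$; the identity that emerges coincides, after using \eqref{C} once more, with \eqref{B2} multiplied by $\theta'$, so that $\theta'\cdot(\text{\eqref{B2}}) = 0$ holds identically and \eqref{B2} itself holds wherever $\theta' \neq 0$. The only delicate point is the locus $\theta' = 0$, where one has divided: there I would argue by continuity of $\theta'$ and $\theta''$, noting that if $\theta'$ vanishes on an interval then $\theta$ is constant and $\gamma$ lies on a ray through the origin, returning us to case (1). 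With both \eqref{A2} and \eqref{B2} established the equivalence is complete.
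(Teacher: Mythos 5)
Your proposal is correct and follows essentially the same route as the paper: the same polar-coordinate framework, with the self-expander equation \eqref{selfexpander} split into the radial equation \eqref{A2} and tangential equation \eqref{B2} via \eqref{1}--\eqref{3}, the arc-length constraint \eqref{C}, and the governing ODEs \eqref{alpha2}/\eqref{uuu2}. If anything, your write-up is more complete than the paper's, since you actually carry out the converse verification the paper dismisses as ``not hard to see'' (recovering \eqref{A2} by eliminating $[\theta']^2$ through \eqref{C}, obtaining \eqref{B2} by differentiating \eqref{C} and substituting, and treating the degenerate locus $\theta'=0$), as well as the straight-line case, which the paper handles only implicitly by analogy with its self-shrinker lemma.
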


Furthermore, calculations analogous to the previous sections, show that the self-expanders are also 
necessarily planar:

\begin{thm}
 Every self-expanding solution of the curve shortening flow $\gamma: I \to \R^n$ lies in a plane.
\end{thm}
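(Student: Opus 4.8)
The plan is to follow the proof of the self-shrinker theorem in Section 5 essentially verbatim, since the defining equation \eqref{selfexpander} differs from \eqref{selfshrinker} only by a sign and the structural mechanism of that proof is sign-insensitive. First I would take $\gamma$ parametrized by arc-length and differentiate \eqref{selfexpander} once. Using $\langle \gamma', \gamma' \rangle = 1$ together with $\langle \gamma, \gamma'' \rangle = \langle \gamma, \gamma \rangle - \langle \gamma, \gamma' \rangle^2$ (which follows from \eqref{selfexpander}), a short computation should give
\[
\gamma''' = -\|\gamma''\|^2\,\gamma' - \langle \gamma, \gamma' \rangle\,\gamma''.
\]
This differs from the self-shrinker identity only in the sign of the last coefficient. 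The decisive fact, namely that $\gamma'''$ lies in the span of $\gamma'$ and $\gamma''$, is therefore preserved, and the rest of the argument goes through unchanged in structure.

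Next I would look for scalar functions $r, s$ on $(a,b)$ making $v(t) := r(t)\gamma'(t) + s(t)\gamma''(t)$ a constant vector, i.e. $(r\gamma' + s\gamma'')' = 0$. Substituting the identity for $\gamma'''$ and collecting the coefficients of the (assumed independent) vectors $\gamma'$ and $\gamma''$ yields the linear system
\begin{align*}
r'(t) &= s(t)\bigl(\langle \gamma, \gamma \rangle - \langle \gamma, \gamma' \rangle^2\bigr),\\
s'(t) &= s(t)\langle \gamma, \gamma' \rangle - r(t),
\end{align*}
the exact analog of \eqref{system} (again differing only in the sign of the $s\langle \gamma, \gamma' \rangle$ term). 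As in Section 5, the associated initial value problem has a unique solution for each prescribed pair $(r(t_0), s(t_0))$, this solution extends to all of $(a,b)$, and any such solution forces $v$ to be constant along the curve.

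Finally, assuming the curve is neither a straight line nor degenerate to a point, I would fix $t_0$ with $\gamma'(t_0)$ and $\gamma''(t_0)$ linearly independent, and let $P$ denote the plane they span. Letting $(r(t_0), s(t_0))$ range over all values realizes every vector of $P$ as a constant field $v$; in particular $P$ contains two independent constant vectors. Since each $v(t) = r(t)\gamma'(t) + s(t)\gamma''(t)$ equals its value $v(t_0) \in P$ for all $t$, the vectors $\gamma'(t)$ and $\gamma''(t)$ at every $t$ are expressible through these fixed independent elements of $P$, so $\gamma'(t) \in P$ for all $t$; integrating shows $\gamma$ lies in a plane.

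I do not anticipate a genuine obstacle here, as the computation and the ODE theory are routine once the $\gamma'''$ identity is in hand. The only point requiring care is the degenerate locus where $\gamma'$ and $\gamma''$ cease to be independent; this is dispatched exactly as in Section 5 by first excluding straight lines and points (treated directly) and then choosing $t_0$ in the complementary open set.
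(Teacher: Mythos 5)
Your proposal is correct and takes essentially the same approach as the paper: the paper offers no separate argument for this theorem, stating only that ``calculations analogous to the previous sections'' yield planarity of self-expanders, and your write-up is precisely that analogous calculation carried out in full. Your sign bookkeeping checks out --- the identity $\gamma''' = -\|\gamma''\|^2\,\gamma' - \langle \gamma, \gamma' \rangle\,\gamma''$ and the linear system $r' = s(\langle \gamma,\gamma\rangle - \langle \gamma,\gamma'\rangle^2)$, $s' = s\langle \gamma,\gamma'\rangle - r$ are the correct expander analogs of the paper's equations --- so your version is in fact more explicit than the paper's own treatment.
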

\end{section}

\end{document}